\theoremstyle{plain}
\newtheorem{theorem}{Theorem}[section]
\newtheorem{lemma}{Lemma}[section]
\newtheorem{definition}{Definition}[section]
\newtheorem{proposition}{Proposition}[section]
\newtheorem{remark}{Remark}[section]
\newtheorem{example}{Example}[section]
\newcommand{\beq}{\begin{equation}}
\newcommand{\eeq}{\end{equation}}
\newcommand{\beqa}{\begin{eqnarray}}
\newcommand{\eeqa}{\end{eqnarray}}
\newcommand{\beqas}{\begin{eqnarray*}}
\newcommand{\eeqas}{\end{eqnarray*}}
\newenvironment{algorithm1}{{\noindent\bf Algorithm 1}}{\hfill}
\def\min{\operatorname{min}}
\def\max{\operatorname{max}}
\renewcommand*{\@biblabel}[1]{\hfill#1.}
\begin{document}

\title{Local convergence of an inexact proximal algorithm for weakly convex functions}

\author{
V. L. Sousa Junior \thanks{Universidade Federal do Cariri, Juazeiro do Norte-CE, Brazil (Email: {\tt valdines.leite@ufca.edu.br}) - {\bf Corresponding author}}
\and
L. V. Meireles
\thanks{Instituto Federal Goiano, Posse-GO, Brazil (Email: {\tt lucas.vidal@ifgoiano.edu.br}).}\and
S. C. Lima\thanks{Universidade Federal do Oeste da Bahia, Barreiras-BA, Brazil  (Email: {\tt samara.lima@ufob.edu.br}).}\and
G. N. Silva  \thanks{Universidade Federal do Piauí, Teresina-PI, Brazil  (Email: {\tt gilson.silva@ufpi.edu.br}).}
}
\date{}
\maketitle
\centerline{\today}
\vspace{.2cm}

\noindent
{\bf Abstract}
Since introduced by Martinet and Rockafellar, the proximal point algorithm was generalized in many fruitful directions. More recently, in 2002, Pennanen studied the proximal point algorithm without monotonicity. A year later, Iusem and Svaiter joined Pennanen to present inexact variants of the method, again without monotonicity. Building on the foundation laid by these two prior works, we propose a variant of the proximal point algorithm designed specifically for weakly convex functions. Our motivation for introducing this inexact algorithm is to increase its versatility and applicability in a broader range of scenarios in optimization and introduce a more adaptable version of the method for typical generalizations. Our study relies heavily on the Moreau envelope, a well-known mathematical tool used to analyze the behavior of the proximal operator. By leveraging the properties of the Moreau envelope, we are able to prove that the proximal algorithm converges in local contexts. Moreover, we present a complexity result to determine the practical feasibility of  the proximal algorithm.

\noindent
{\bf Keywords.} Proximal point algorithms; weakly convex functions; moreau envelope; nonconvex optimization.

\noindent{\bf AMS Classification.} 90C30\,$\cdot$\,90C29\,$\cdot$\,90C26.


\section{Introduction}

The proximal point algorithm, introduced by Martinet \cite{Martinet1970} and Rockafellar \cite{Rockafellar1976}, has undergone several generalizations beyond convex optimization since its inception. These generalizations have been applied to a variety of situations and have found use in nonconvex programming. Various papers have explored the convergence of the algorithm for non-convex functions, including quasi-convex, pseudo-convex, and difference of convex functions. Kaplan and Tichatschke \cite{Kaplan1998} provides a comprehensive review of the literature on the proximal point algorithm for nonconvex minimization and proposes new conditions for convergence in a special class of nonconvex functions.

Intending to continue this path, we will study the proximal algorithm for weakly convex functions, a class that includes convex functions and smooth functions with Lipschitz continuous gradient, and the composite function class. This class was introduced by Nurminskii \cite{Nurminskii1973} and has been widely used in optimization due to its applicability to a variety of problems. Weakly convex functions are particularly useful for analyzing nonconvex optimization, as they exhibit some convex-like properties that make them amenable to certain optimization techniques. For a more detailed explanation, see \cite{Drusvyatskiy2019}.

The key construction in our analysis is to use the well known \textit{Moreau envelope}. The Moreau envelope of a convex function is always convex, this is a well-known fact in variational analysis and optimization. Notably, the Moreau envelope can also be convex even if its associated function is not (see Example \ref{example1}). This property plays a vital role in our convergence analysis.
The main result shows that the local convexity of the Moreau envelope is sufficient to guarantee the local convergence of the proximal point algorithm for weakly convex functions. Our approach builds upon recent work by Pennanen \cite{Pennanen2002} and Iusem \cite{IusemPennanen2003}, who identified conditions for ensuring the local convergence of the proximal point algorithm for a nonmonotone operator. They observed that the Yosida regularization, which is closely related to the Moreau envelope, can be locally monotone even if its associated operator is not. Furthermore, the local maximal monotonicity of the Yosida regularization is enough to ensure the local convergence of the proximal point algorithm. 

Our main objective is to present a version of the proximal point algorithm that is applicable to weakly convex functions. In our analysis, we use the Moreau envelope as a substitute for the Yosida regularization that was utilized in \cite{IusemPennanen2003,Pennanen2002}. One of the primary challenges in our analysis is to demonstrate the existence of the iterates of the method. In \cite{IusemPennanen2003}, Minty's theorem was used to address this issue. However, as we are not working with a point-set operator, we present a different proof of this result using the Banach fixed-point theorem. While the algorithm we present here is similar to the one presented in \cite{IusemPennanen2003}, it is more suitable for future generalizations in scalar optimization or even in a multi-objective scenario, since the main strategy to prove the existence of the iterates was modified to be adapt in others scenarios in the future that do not involve point-set operators.  Our motivation for presenting this version of the algorithm is to introduce a more adaptable version of the method for typical generalizations.

Besides proving the convergence, we study the complexity bounds of the algorithm, as is commonly analysed for subgradients methods. Subgradient methods can also be used to optimize weakly convex functions, but when additional structure is present in the function, better complexity can be achieved. In particular, if the proximal operator of a nonsmooth weakly convex function can be computed analytically, it is possible to prove complexity bounds of $\mathcal{O}\left(\epsilon^{-2}\right)$, which is the same as for steepest descent methods in the smooth nonconvex case. This indicates that the nonsmoothness can be mitigated if the proximal operator can be applied to the function.

For convex optimization problems, gradient methods on smooth functions can achieve complexity bounds of $\mathcal{O}\left(\epsilon^{-1}\right)$, and accelerated gradient methods can achieve $\mathcal{O}\left(\epsilon^{-1/2}\right)$. These same bounds can be achieved for nonsmooth problems if the nonsmooth term is handled by a proximal operator. However, if the explicit proximal operator is not available and subgradient methods must be used, the complexity reverts to $\mathcal{O}\left(\epsilon^{-2}\right)$. Hence, by assuming that $f$ is weakly-convex and nonsmooth we show that our inexact proximal point method achieve the complexity $\mathcal{O}\left(\epsilon^{-2}\right).$ See more details about complexity of weakly-convex functions in \cite{Bohn}.

The organization of the paper is as follows. In Section \ref{section2}, some notation
and basic results used throughout the paper are presented. In Section \ref{section3}, we
introduce the very important existence lemma. This result is the key to our convergence analysis. Our inexact proximal method is presented in Section \ref{section4}, and the main results are stated and proved. Some final remarks are made in Section \ref{section5}.

\section{Preliminaries}\label{section2}

The main definitions and notations used throughout the paper are presented. We adopt the standard notation of non-smooth analysis as in \cite{Drusvyatskiy2019,Mordukhovich2006,Rockafellar1970,RockafellarWets1998}.

\subsection{Basic notation in variational analysis}
The Euclidean scalar product of $\mathbb{R}^n$ and its corresponding norm are respectively denoted by $\langle\cdot,\cdot\rangle$ and $\|\cdot\|$. If $\bar x\in \mathbb{R}^n$, the \textit{open ball} centered at $\bar{x}$ with radius $\delta$ is the set $B(\bar{x},\delta)$. Similarly, $B[\bar{x},\delta]$ will denote the \textit{closed ball} with center $\bar{x}$ with radius $\delta$. The \textit{effective domain} of a lower semicontinuous function (lsc) 
$f:\mathbb{R}^n\to\overline{\mathbb{R}}:={\mathbb{R}}\cup\{\pm \infty\}$, denoted by $\mbox{dom}~f$, is the subset of ${\mathbb{R}}^n$ on which $f$ is finite-valued, i.e., $\mbox{dom}\,f:=\{x\in{\mathbb{R}}^n:f(x)<+\infty\}$. 

The \textit{Moreau envelopes} $e_\lambda f$ of a given function $f:\mathbb{R}^n\to\overline{\mathbb{R}}$, for any $\lambda>0$, is defined by
\begin{equation*}\label{Moreau}
e_{\lambda}f(x):=\min_{y}\left\{f(y)+\frac{1}{2\lambda}\|y-x\|^2\right\},
\end{equation*}
and the associated \textit{proximal operator}, $P_{\lambda} f$, defined by
\begin{equation*}\label{proxope}
P_{\lambda}f(x):=\mbox{argmin}_{y}\left\{f(y)+\frac{1}{2\lambda}\|y-x\|^2\right\}.
\end{equation*}

Given a convex function $f:\mathbb{R}^n\to\overline{\mathbb{R}}$, a vector $v$ is called a subgradient of $f$ at a
point $x\in\mbox{dom} f$ if the inequality $f(y)\geq f(x)+\langle v,y-x\rangle$ holds for all $y\in\mathbb{R}^n$. The set of all subgradients of $f$ at $x$ is denoted by $\partial_C f(x)$, and is called the \textit{subdifferential} of $f$ at $x$. For any point $x\notin\mbox{dom} f$ , we define $\partial_C f (x)$ to be the empty set.	

Let $f$ be a proper and lower semicontinuous function and a point $\bar{x}$ with $f(\bar{x})$ finite. The \textit{Fréchet subdifferential} of $f$ at $\bar{x}$, denoted by $\hat{\partial}f(\bar{x})$, is the set of all vectors $v$ satisfying
\begin{equation*}
\displaystyle\liminf_{y\to \bar{x}; y\neq \bar{x}}
\frac{1}{\|y-\bar{x}\|}\left(f(y)-f(\bar{x})-\langle v,y-\bar{x} \rangle\right)\geq 0.
\end{equation*}
As noted by Bolte et al. in \cite{Bolte2007}, the Fréchet subdifferential is not completely satisfactory in optimization, since $\hat{\partial} f(x)$ might be empty-valued at points of particular 
interest. Moreover, the Fréchet subdifferential is not a closed mapping (see, for instance, \cite[p.304]{RockafellarWets1998}), a crucial assumption in convergence analysis. This justifies the necessity of an enlargement of the Fréchet subdifferential. To fill this gap, one defines the \textit{limiting subdifferential} of $f$ at $\bar{x}$, as the set of all vectors $v$ satisfying
\begin{equation*}
\partial f(\bar{x}):=
\left\{x^{\ast}\in\mathbb{R}^n~:~\exists x_{n}\rightarrow \bar{x}, f(x_n)\rightarrow
f(\bar{x}),x_{n}^{\ast}\in\hat{\partial}f(x_n);x_{n}^{\ast}\rightarrow
x^{\ast}\right\}.
\end{equation*}
It is straightforward to check that $\hat{\partial}f(x)\subset \partial f(x)$. For convex functions $f$, the subdifferentials $\hat{\partial}f(x)$, $\partial_C f(x)$ and $\partial f(x)$ coincide, while for $C^1$-smooth functions $f$, they consist only of the gradient $\nabla f (x)$.

We say that $\bar{x}$ is \textit{stationary} for $f$ if the inclusion $0\in\partial f (\bar{x})$ holds.

\subsection{Weak convexity}

\begin{definition} We say that a function $f:\mathbb{R}^n\to\overline{\mathbb{R}}$ is $\rho$-weakly
convex on a set $U$ if for any points $x, y\in U$ and $\alpha\in [0,1]$, the approximate secant
inequality holds:
$$
f(\alpha x+(1-\alpha)y)\leq \alpha f(x)+(1-\alpha)f(y)+\frac{\rho\alpha(1-\alpha)}{2}\|x-y\|^2.
$$
\end{definition}

It is well-known that for a locally Lipschitz function $f:\mathbb{R}^n\to\mathbb{R}$, the following statements
are equivalent; see e.g. \cite{Daniilidis2005}.

\begin{enumerate}
\item (Weak convexity) $f$ is $\rho$-weakly convex on $\mathbb{R}^n$ .
\item (Perturbed convexity) The function $f(\cdot)+\frac{\rho}{2}\|\cdot\|^2$ is convex on $\mathbb{R}^n$ .
\item (Quadratic lower-estimators) For any $x, y\in \mathbb{R}^n$ and $v\in\partial f (x)$, the inequality
\begin{equation*}\label{eqquadraticestimation}
f(y)\geq f(x)+\langle v,y-x\rangle-\frac{\rho}{2}\|y-x\|^2
\end{equation*} 
holds.
\end{enumerate}

Clearly, a convex function  is weakly convex, in this case $\rho=0.$ Also, if $f$ is a smooth function with a $L$-Lipschitz continuous gradient, then $f$ is weakly convex function with $\rho=L.$ Moreover, the composite of a convex with a weakly convex function is weakly convex, i.e., is $f(x)=g(h(x))$, when $g:\mathbb{R}^m\to \mathbb{R}$ convex and Lipschitz and $h:\mathbb{R}^n\to \mathbb{R}^m$ smooth with gradient Lipschitz, then $f$ is weakly convex. More details can be found in \cite{Drusvyatskiy2019,Oliveira,Syrtseva}.

An immediate consequence of the above definition is that any weakly convex function $f$ can be expressed as a difference of convex (DC) functions. Specifically, we can consider the function 
$\xi(x):=f(x)+\frac{\rho}{2}\|x\|^2$, 
which is convex since $f$ is weakly convex. This allows us to decompose $f$ as:
$$
f(x)=\xi(x)-\frac{\rho}{2}\|x\|^2,
$$  
where $\rho\geq 0$ is a constant. This is a natural DC decomposition of $f.$

We conclude this section by presenting the following result, that establishes the existence of a neighborhood in which the proximal operator, $P_{\lambda} f$, is well-defined and a single-valued, Lipschitz continuous operator.
\begin{proposition}\label{Mainprop}
\rm{(Moreau envelope of $\rho$-weakly convex function)} Assume that $\bar{x}\in\mbox{dom}\,f$ is such that $0\in\partial f(\bar{x})$. Suppose that $f$ is $\rho$-weakly convex on $\mathbb{R}^n$, $\rho>0$ and fix $\lambda\in]0, 1/\rho[$. Then, there exists $\alpha>0$ such that, on $B[\bar{x},\alpha]$, the proximal operator, $P_\lambda f$, is well-defined, single-valued, Lipschitz continuous with constant $1/(1-\lambda\rho)$ and
\begin{equation*}\label{Mainprop1}
P_\lambda f(x)=(I+\lambda \partial f)^{-1}(x).
\end{equation*}
Besides, the Moreau envelope $e_\lambda f$ is $C^1$-smooth with gradient
\begin{equation}\label{Mainprop2}
\nabla e_\lambda f(x)=\lambda^{-1}(x-P_\lambda f(x))=\lambda^{-1}\left[I-\left[I+\lambda\partial f\right]^{-1}\right](x).
\end{equation}
Moreover, stationary points of $e_\lambda f$ and of $f$ coincide.
\end{proposition}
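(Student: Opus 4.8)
The plan is to reduce every assertion to the convexity of the shifted function $\xi(y):=f(y)+\tfrac{\rho}{2}\|y\|^2$, the DC decomposition already recorded above. Since $\lambda\in{]0,1/\rho[}$, I would rewrite the proximal objective as
\[
f(y)+\frac{1}{2\lambda}\|y-x\|^2=\xi(y)+\frac{1}{2}\Big(\frac{1}{\lambda}-\rho\Big)\|y\|^2-\frac{1}{\lambda}\langle x,y\rangle+\frac{1}{2\lambda}\|x\|^2,
\]
which displays it as the sum of the convex proper lsc function $\xi$ and a strictly convex quadratic, the coefficient $\tfrac1\lambda-\rho$ being positive. Hence the objective is strongly convex and coercive, so for every $x$ it has a unique global minimizer; this already yields that $P_\lambda f$ is well-defined and single-valued. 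These conclusions hold on all of $\mathbb{R}^n$, so in particular on any $B[\bar x,\alpha]$, and the stationarity of $\bar x$ and the radius $\alpha$ serve only to fix the localization used later by the algorithm.

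Next I would identify the minimizer with $(I+\lambda\partial f)^{-1}$. Writing $y=P_\lambda f(x)$, Fermat's rule applied to the strongly convex objective, together with the sum rule for the limiting subdifferential (valid because the quadratic term is $C^1$), gives $0\in\partial f(y)+\lambda^{-1}(y-x)$, i.e.\ $y\in(I+\lambda\partial f)^{-1}(x)$. Conversely any such $y$ satisfies $0\in\partial f(y)+\lambda^{-1}(y-x)$, which is the optimality inclusion for the same strongly convex problem and therefore has a single solution; thus $(I+\lambda\partial f)^{-1}(x)=\{P_\lambda f(x)\}$ and the stated identity follows.

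For the Lipschitz constant I would argue monotonically. If $y_i=P_\lambda f(x_i)$ then $\lambda^{-1}(x_i-y_i)\in\partial f(y_i)=\partial\xi(y_i)-\rho y_i$, so $u_i:=\lambda^{-1}(x_i-y_i)+\rho y_i\in\partial\xi(y_i)$. Monotonicity of $\partial\xi$ gives $\langle u_1-u_2,y_1-y_2\rangle\ge 0$, which after substituting the $u_i$ rearranges to $\langle x_1-x_2,y_1-y_2\rangle\ge(1-\lambda\rho)\|y_1-y_2\|^2$; Cauchy--Schwarz then yields $\|y_1-y_2\|\le(1-\lambda\rho)^{-1}\|x_1-x_2\|$.

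The differentiability of $e_\lambda f$ is the step I expect to demand the most care. Writing $p:=P_\lambda f(x)$ and $p':=P_\lambda f(x')$ and inserting these feasible vectors into the definition of the envelope, expanding the squared norms produces the two-sided estimate
\[
\big\langle\lambda^{-1}(x-p'),\,x'-x\big\rangle+\tfrac{1}{2\lambda}\|x'-x\|^2\;\le\;e_\lambda f(x')-e_\lambda f(x)\;\le\;\big\langle\lambda^{-1}(x-p),\,x'-x\big\rangle+\tfrac{1}{2\lambda}\|x'-x\|^2.
\]
Subtracting the candidate linear term $\langle\lambda^{-1}(x-p),x'-x\rangle$ and bounding $\|p-p'\|$ by the Lipschitz estimate just proved shows that both sides are $O(\|x'-x\|^2)=o(\|x'-x\|)$, which is precisely Fréchet differentiability with $\nabla e_\lambda f(x)=\lambda^{-1}(x-P_\lambda f(x))$; the identity $P_\lambda f=(I+\lambda\partial f)^{-1}$ then supplies the second form in \eqref{Mainprop2}. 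Continuity of this gradient follows from the Lipschitz continuity of $P_\lambda f$, so $e_\lambda f$ is $C^1$. Finally $\nabla e_\lambda f(x)=0$ holds iff $x=P_\lambda f(x)$, i.e.\ iff $x\in(I+\lambda\partial f)^{-1}(x)$, i.e.\ iff $0\in\partial f(x)$, giving the coincidence of stationary points.
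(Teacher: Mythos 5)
Your proof is correct, but it takes a genuinely different route from the paper, whose entire ``proof'' is a citation of Proposition 3.1 and Corollary 3.4 of Hoheisel, Laborde and Oberman (2018). You instead give a self-contained argument built on the DC decomposition $\xi=f+\tfrac{\rho}{2}\|\cdot\|^2$: strong convexity and coercivity of the shifted proximal objective give existence and uniqueness of $P_\lambda f$, monotonicity of $\partial\xi$ gives the $(1-\lambda\rho)^{-1}$-Lipschitz bound, and the two-sided envelope estimate gives Fr\'echet differentiability with a continuous gradient. All the steps check out; the only ingredients you use tacitly are the exact sum rule $\partial(f+g)=\partial f+\nabla g$ for $C^1$-smooth $g$ (which justifies both Fermat's rule and the identity $\partial f=\partial\xi-\rho I$ needed in the converse direction of the resolvent identification) and the fact that a proper lsc convex $\xi$ has an affine minorant, so that adding the quadratic with coefficient $\tfrac{1}{\lambda}-\rho>0$ indeed yields coercivity. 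Note that your argument actually establishes all the conclusions globally on $\mathbb{R}^n$ --- the hypothesis $0\in\partial f(\bar x)$ and the radius $\alpha$ play no role --- which is stronger than the stated local version; the localization in the paper's statement mirrors the formulation of the cited results, which are designed to also cover functions that are weakly convex only near $\bar x$. What your elementary approach buys is independence from the external reference; what the citation buys the authors is coverage of that more general local setting without extra work.
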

\begin{proof}
This proposition is a combination of two other results that can be found in
\cite[Proposition 3.1 and Corollary 3.4] {Hoheisel2018} .
\end{proof}

\section{The existence result}\label{section3}

The problem of finding iterates for proximal algorithms when dealing with weakly convex functions is a complex one. For nonmonotone operators, the local maximal monotonicity of the so-called Yosida regularization was enough to address this issue in  \cite{IusemPennanen2003,Pennanen2002}. However, in our case, as we are not working with a point-set operator, we present a different proof of this result using a very classical contraction mapping principle, the Banach fixed-point theorem. This important theory result, which is used as primary support in proving convergence result, can be found in \cite[Theorem 4.48, p.92]{Apostol1974}. The result is the following:

\begin{theorem} \label{pointfixedtheorem} \rm{(Banach fixed-point theorem)} \\
Let $(X,d)$ be a nonempty complete metric space and let $\Phi:X\to X$ be a strict
contraction, i.e.,
$$
d(\Phi(x),\Phi(y))<\theta d(x,y)\quad\forall x,y\in X\quad\mbox{with}\quad 0<\theta<1.
$$
Then $\Phi$ has a unique fixed point, $\Phi(z)=z$.
\end{theorem}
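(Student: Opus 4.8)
The plan is to produce a fixed point explicitly by Picard iteration and then verify convergence, the fixed-point property, and uniqueness in turn. First I would fix an arbitrary starting point $x_0 \in X$ (available since $X$ is nonempty) and define the iterates $x_{n+1} := \Phi(x_n)$ for $n \geq 0$. The first task is to control the distance between consecutive iterates: applying the contraction estimate once gives $d(x_{n+1}, x_n) \leq \theta\, d(x_n, x_{n-1})$, and a routine induction then yields the geometric bound $d(x_{n+1}, x_n) \leq \theta^n\, d(x_1, x_0)$.

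Next I would show that $(x_n)$ is Cauchy. For indices $m > n$, the triangle inequality combined with the previous bound gives
\[
d(x_m, x_n) \leq \sum_{k=n}^{m-1} d(x_{k+1}, x_k) \leq \Big(\sum_{k=n}^{m-1}\theta^k\Big) d(x_1, x_0) \leq \frac{\theta^n}{1-\theta}\, d(x_1, x_0).
\]
Because $0 < \theta < 1$, the right-hand side tends to $0$ as $n \to \infty$, so the sequence is Cauchy. Invoking the completeness of $(X,d)$ — the single place where that hypothesis is genuinely needed — produces a limit $z \in X$ with $x_n \to z$.

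It then remains to identify $z$ as a fixed point and argue uniqueness. Since $\Phi$ is a contraction it is in particular Lipschitz, hence continuous, so passing to the limit in the recursion $x_{n+1} = \Phi(x_n)$ gives $z = \lim_n x_{n+1} = \lim_n \Phi(x_n) = \Phi(z)$. For uniqueness, suppose $z$ and $w$ are both fixed. Then $d(z,w) = d(\Phi(z), \Phi(w)) \leq \theta\, d(z,w)$, so that $(1-\theta)\, d(z,w) \leq 0$; as $\theta < 1$, this forces $d(z,w) = 0$, i.e.\ $z = w$.

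I do not expect any step to present a genuine obstacle, since the argument is entirely classical; the only conceptual content is recognizing that the geometric decay of consecutive distances is strong enough to force the Cauchy property, after which completeness and continuity finish the proof. The one point worth flagging is that the statement is phrased with a strict inequality $d(\Phi(x),\Phi(y)) < \theta\, d(x,y)$, which is only meaningful for $x \neq y$; the non-strict bound $d(\Phi(x),\Phi(y)) \leq \theta\, d(x,y)$ holds trivially when $x = y$ and is all that the estimates above require.
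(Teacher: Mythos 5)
Your proof is correct and complete: it is the classical Picard-iteration argument (geometric decay of consecutive distances, Cauchy property, completeness to extract the limit, continuity to identify it as fixed, and the contraction estimate for uniqueness). Note that the paper itself does not prove this theorem — it only cites it from Apostol — so there is no in-paper argument to compare against; your observation that the strict inequality in the statement should be read as the non-strict bound $d(\Phi(x),\Phi(y))\leq\theta\,d(x,y)$ (trivial when $x=y$) is a sensible and correct clarification.
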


As a consequence of the above result, we next establish the following result: 

\begin{lemma}\label{mainlemma} Let $f:\mathbb{R}^n\to\overline{\mathbb{R}}$ be a proper, bounded below and lower semicontinuous function at $\bar x\in\mbox{dom}\,f$. Suppose that $f$ is $\rho$-weakly convex on a set $U$ and $0\in\partial f(\bar x)$, suppose that $e_{\gamma}f $ is convex on $B[\bar x, \delta]$.
Take any $\lambda,\gamma>0$ satisfying, 
\begin{equation*}\label{maincity1}
0<2\gamma<\lambda<\frac{1}{\rho}.
\end{equation*}
Define
\begin{equation}\label{L}
L:=\left[1+\frac{(\lambda-\gamma)}{\gamma}\left(1+\frac{1}{1-\gamma \rho}\right)\right],
\end{equation}
and take any positive constant $\sigma$ such that $\sigma<2/L^2$. Then, if 
\begin{equation*}
\kappa:=1-2\sigma+\sigma^2L^2,
\end{equation*} we have $0<\kappa<1$.
Furthermore, take any $\beta$ such that
\begin{equation}\label{beta}
0<\beta<\min\left\{\delta,\frac{\delta}{\sigma}(1-\sqrt{\kappa})\right\}.
\end{equation}
Then, the following holds:
\begin{description}
\item[\rm{(i)}] For each $x\in B[\bar x,\beta]$, there exists 
$z\in B(\bar x, \delta)$ satisfying
\begin{equation}\label{eqdefinitionz}
z=x-(\lambda-\gamma)\nabla e_{\gamma}f(z).
\end{equation} 
\item[\rm{(ii)}] If we define
\begin{equation}\label{y}
y=z-\gamma(\lambda-\gamma)^{-1}(x-z),
\end{equation}
it follows that  $y\in B[\bar x, \beta]$ and $y = P_{\lambda}f(x)$.
\end{description} 
\end{lemma}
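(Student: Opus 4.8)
The plan is to recast \eqref{eqdefinitionz} as a fixed-point equation and invoke the Banach fixed-point theorem (Theorem \ref{pointfixedtheorem}) on the complete metric space $X=B[\bar x,\delta]$. First I would record the facts supplied by Proposition \ref{Mainprop}: after possibly shrinking $\delta$ so that Proposition \ref{Mainprop} is in force on $B[\bar x,\delta]$ for the parameter $\gamma$, the envelope $e_\gamma f$ is $C^1$ with $\nabla e_\gamma f$ Lipschitz of constant $\ell:=\gamma^{-1}\bigl(1+(1-\gamma\rho)^{-1}\bigr)$; the coincidence of stationary points together with $0\in\partial f(\bar x)$ gives $\nabla e_\gamma f(\bar x)=0$; and convexity of $e_\gamma f$ on $B[\bar x,\delta]$ makes $\nabla e_\gamma f$ monotone there. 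Observe that the constant in \eqref{L} is exactly $L=1+(\lambda-\gamma)\ell$, and in particular $L>1$. The bound $0<\kappa<1$ is then immediate: as a quadratic in $\sigma$, $\kappa=\sigma^2L^2-2\sigma+1$ has discriminant $4(1-L^2)<0$, so $\kappa>0$, while $\kappa=1-\sigma(2-\sigma L^2)<1$ because $0<\sigma<2/L^2$.

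For (i), I would define $\Phi(z):=z-\sigma\bigl[z-x+(\lambda-\gamma)\nabla e_\gamma f(z)\bigr]$ for $z\in B[\bar x,\delta]$, noting that $\Phi(z)=z$ is equivalent to \eqref{eqdefinitionz}. Setting $u=z_1-z_2$ and $d=\nabla e_\gamma f(z_1)-\nabla e_\gamma f(z_2)$, I expand $\|\Phi(z_1)-\Phi(z_2)\|^2=\|u\|^2-2\sigma\langle u,u+(\lambda-\gamma)d\rangle+\sigma^2\|u+(\lambda-\gamma)d\|^2$; monotonicity gives $\langle u,d\rangle\ge 0$, so the cross term is at most $-2\sigma\|u\|^2$, and the Lipschitz bound gives $\|u+(\lambda-\gamma)d\|\le L\|u\|$. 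This produces $\|\Phi(z_1)-\Phi(z_2)\|^2\le\kappa\|u\|^2$, so $\Phi$ is a $\sqrt{\kappa}$-contraction. For the self-mapping property I would use $\nabla e_\gamma f(\bar x)=0$ to get $\Phi(\bar x)=\bar x+\sigma(x-\bar x)$, whence $\|\Phi(\bar x)-\bar x\|=\sigma\|x-\bar x\|\le\sigma\beta$; then for $z\in B[\bar x,\delta]$ the triangle inequality gives $\|\Phi(z)-\bar x\|\le\sqrt{\kappa}\,\delta+\sigma\beta<\delta$ precisely by the choice \eqref{beta}. Thus $\Phi$ maps $B[\bar x,\delta]$ into $B(\bar x,\delta)$, and Theorem \ref{pointfixedtheorem} yields a unique $z\in B(\bar x,\delta)$ solving \eqref{eqdefinitionz}.

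For (ii), I would set $v:=\nabla e_\gamma f(z)$, so that \eqref{eqdefinitionz} reads $x-z=(\lambda-\gamma)v$. By \eqref{Mainprop2}, $v=\gamma^{-1}(z-P_\gamma f(z))$, hence $P_\gamma f(z)=z-\gamma v=z-\gamma(\lambda-\gamma)^{-1}(x-z)=y$, matching \eqref{y}; in particular $z-y=\gamma v$, and since $P_\gamma f(z)=(I+\gamma\partial f)^{-1}(z)$ we obtain $v\in\partial f(y)$. Adding the two increments gives $x-y=(x-z)+(z-y)=(\lambda-\gamma)v+\gamma v=\lambda v$, so $(x-y)/\lambda=v\in\partial f(y)$, i.e. $x\in(I+\lambda\partial f)(y)$; Proposition \ref{Mainprop} applied with parameter $\lambda$ (after possibly shrinking the radii so the proposition holds at $x$) then gives $y=(I+\lambda\partial f)^{-1}(x)=P_\lambda f(x)$.

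It remains to prove $y\in B[\bar x,\beta]$, which I expect to be the main obstacle: both $P_\gamma f$ and $P_\lambda f$ are only Lipschitz with constant strictly larger than one near the fixed point $\bar x$, so a naive Lipschitz estimate gives merely $\|y-\bar x\|\le\beta/(1-\lambda\rho)>\beta$. The remedy is to exploit the convexity of $e_\gamma f$ together with the gap condition $\lambda>2\gamma$. Writing $a=x-\bar x$ and $b=y-\bar x$, monotonicity of $\nabla e_\gamma f$ with $\nabla e_\gamma f(\bar x)=0$ yields $\langle v,z-\bar x\rangle\ge 0$; since $z-\bar x=b+\gamma v$, this reads $\langle v,b\rangle\ge-\gamma\|v\|^2$. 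Substituting into $\|a\|^2-\|b\|^2=\|b+\lambda v\|^2-\|b\|^2=2\lambda\langle v,b\rangle+\lambda^2\|v\|^2$ gives $\|a\|^2-\|b\|^2\ge\lambda(\lambda-2\gamma)\|v\|^2\ge 0$, using $\lambda>2\gamma$. Hence $\|y-\bar x\|=\|b\|\le\|a\|=\|x-\bar x\|\le\beta$, so $y\in B[\bar x,\beta]$, which finishes the argument.
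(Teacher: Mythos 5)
Your proof is correct and follows essentially the same route as the paper's: the same map $\Phi(z)=z-\sigma\bigl(S(z)-x\bigr)$ with $S(z)=z+(\lambda-\gamma)\nabla e_{\gamma}f(z)$, the same monotonicity and Lipschitz estimates yielding the $\sqrt{\kappa}$-contraction, and the same resolvent manipulations for part (ii). If anything, you are slightly more careful than the paper in explicitly verifying that $\Phi$ maps $B[\bar x,\delta]$ into itself before invoking the fixed-point theorem (the paper only checks a posteriori that the fixed point lies in $B(\bar x,\delta)$), and your derivation of $\|y-\bar x\|\le\|x-\bar x\|$ is a streamlined version of the paper's two-step computation.
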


\begin{proof}
From (\ref{L}), it follows that $L>1$, and then $$0\leq(\sigma-1)^2<1-2\sigma+\sigma^2L^2=\kappa.$$ On the other, since $\sigma<2/L^2$, we have 
$$\kappa=1-2\sigma+\sigma^2L^2<1.$$ This proves that $0<\kappa<1$.

Let us prove (i). Since $f$ is  $\rho$-weakly convex on a set $U$ and $0\in\partial f(\bar x)$, it follows from Proposition \ref{Mainprop} that for each $\gamma\in]0, 1/\rho[$ there is a neighborhood $B[\bar{x},\alpha]$ such that the mapping $P_\gamma f$ is single-valued and Lipschitz continuous with
constant $1/(1-\gamma\rho)$, and the function $e_\gamma f$ is $C^1$-smooth in $B[\bar{x},\alpha]$. If necessary, shrink $\delta$ so that $\delta<\alpha$ and define
\begin{equation}\label{S}
S(z)=z+(\lambda-\gamma)\nabla e_{\gamma}f(z),\quad z\in B[\bar{x},\delta].
\end{equation}
For all $z,w\in B[\bar{x},\delta]$,
\begin{eqnarray*}
\langle S(z)-S(w),z-w\rangle
&=&\langle z+(\lambda-\gamma)\nabla e_{\gamma}f(z)-w-(\lambda-\gamma)\nabla e_{\gamma}f(w),z-w\rangle\\
&=&\langle (z-w)+(\lambda-\gamma)(\nabla e_{\gamma}f(z)-\nabla e_{\gamma}f(w)),z-w\rangle\\
&=&\|z-w\|^2+(\lambda-\gamma)\langle \nabla e_{\gamma}f(z)-\nabla e_{\gamma}f(w),z-w\rangle.
\end{eqnarray*}
Taking into account that $e_{\gamma}f$ is convex on $B[\bar{x},\delta]$, for all $z,w\in B[\bar{x},\delta]$,
$$
\langle \nabla e_{\gamma}f(z)-\nabla e_{\gamma}f(w),z-w\rangle\geq0.
$$
Hence it follows that, for all $z,w\in B[\bar{x},\delta]$,
\begin{equation}\label{ineq1}
\langle S(z)-S(w),z-w\rangle\geq \|z-w\|^2.
\end{equation}
On the other hand, for all $z,w\in B[\bar{x},\delta]$, we can use the well known triangle inequality to obtain that
\begin{eqnarray}\label{eq:BD}
\|S(z)-S(w)\|
&=&\|z+(\lambda-\gamma)\nabla e_{\gamma}f(z)-w-(\lambda-\gamma)\nabla e_{\gamma}f(w)\|\nonumber\\
&\leq &\|z-w\|+(\lambda-\gamma)\|\nabla e_{\gamma}f(z)-\nabla e_{\gamma}f(w)\|.
\end{eqnarray}
Taking into account that \eqref{Mainprop2} holds, we have that
\begin{eqnarray*}
\|\nabla e_{\gamma}f(z)-\nabla e_{\gamma}f(w)\|
&\leq&\frac{1}{\gamma}\|(z-P_\gamma f(z))-(w-P_\gamma f(w))\|\\
&\leq&\frac{1}{\gamma}\left(\|z-w\|+\|P_\gamma f(z)-P_\gamma f(w)\|\right).
\end{eqnarray*}
Due to Proposition \ref{Mainprop}, $P_{\gamma}f$ is Lipschitz continuous with constant $1/(1-\gamma\rho)$. Hence,
last inequality becomes
$$
\|\nabla e_{\gamma}f(z)-\nabla e_{\gamma}f(w)\|\leq \frac{1}{\gamma}\left(1+\frac{1}{1-\gamma \rho}\right)\|z-w\|.
$$
Combining last inequality with \eqref{eq:BD}, and using the definition of $L$ in \eqref{L}, we conclude that
\begin{eqnarray*}
\|S(z)-S(w)\|
&\leq&\|z-w\|+\frac{(\lambda-\gamma)}{\gamma}\left(1+\frac{1}{1-\gamma \rho}\right)\|z-w\|\\
&=& \left[1+\frac{(\lambda-\gamma)}{\gamma}\left(1+\frac{1}{1-\gamma \rho}\right)\right]\|z-w\|\\
&=& L\|z-w\|.
\end{eqnarray*}
Consequently, for all $z,w\in B[\bar{x},\delta]$, we have
\begin{equation}\label{ineq2}
\|S(z)-S(w)\| \leq  L\|z-w\|.
\end{equation}
Now, take any $x\in B[\bar{x},\beta]$ and define
\begin{equation}\label{phi}
\Phi(z)=\sigma x-\sigma S(z)+z,\quad z\in B[\bar{x},\delta].
\end{equation}
Next, we will verify that $\Phi$ is a strict contraction with $\theta=\sqrt{\kappa}$. For any $z,w\in B[\bar x, \delta]$, and  after simple manipulations combining \eqref{ineq1}, \eqref{ineq2} and \eqref{phi} we obtain 
\begin{eqnarray*}
\|\Phi(z)-\Phi(w)\|^2
&=&\|(\sigma x-\sigma S(z)+z)-(\sigma x-\sigma S(w)+w)\|^2\\
&=&\|(z-w)-\sigma(S(z)-S(w))\|^2\\
&=&\|z-w\|^2-2\sigma\langle z-w,S(z)-S(w)\rangle+\sigma^2\|S(z)-S(w)\|^2\\
&\leq &\|z-w\|^2-2\sigma\|z-w\|^2+\sigma^2L^2\|z-w\|^2\\
&=&\left(1-2\sigma+\sigma^2L^2\right)\|z-w\|^2.
\end{eqnarray*}
Therefore, 
$$
\|\Phi(z)-\Phi(w)\|\leq\sqrt{\kappa}\|z-w\|.
$$
This proves that $\Phi$ is a strict contraction. Therefore, we can apply  Theorem {pointfixedtheorem} obtaining that  $\Phi(z)$ has a unique fixed point $\Phi(z)=z$. From the definition of $S$, we can obtain that $ x=S(z)$. If remains to prove that $z\in B(\bar{x},\delta)$. In fact, since $0\in\partial f(\bar x)$, we have that $\bar{x}$ is also a stationary point of $e_{\gamma} f$, due to Proposition \ref{Mainprop}. Using the expression \eqref{S}, we can conclude that $\bar{x}=S(\bar{x})$. Thus,
$$
\|\bar{x}-\Phi(\bar{x})\|=\|\bar{x}-(\sigma x-\sigma \bar{x}+\bar{x})\|=\sigma\|x-\bar{x}\|<\sigma\beta<\delta(1-\sqrt{\kappa}).
$$
Since $z$ is a fixed point of $\Phi$, we have
\begin{eqnarray*}
\|z-\bar{x}\|
&\leq&\|\Phi(z)-\Phi(\bar{x})\|+\|\Phi(\bar{x})-\bar{x}\|\\
&<&\sqrt{\kappa}\|z-\bar{x}\|+\delta(1-\sqrt{\kappa}).
\end{eqnarray*}
Hence, $(1-\sqrt{\kappa})\|z-\bar{x}\|<\delta(1-\sqrt{\kappa})$. This shows that $z\in B(\bar{x},\delta)$, completing the proof of \eqref{eqdefinitionz}. 

To prove (ii), consider $y$ defined as  in \eqref{y}. Then we are going to prove that $y$ lies in the closed ball  $ B[\bar x, \delta]$  and $y=P_{\lambda}f(x)$.
To do so, initially, observe that
\begin{equation}\label{maineq}
(\lambda-\gamma)^{-1}(x-z)={\gamma}^{-1}(z-y)=\lambda^{-1}(x-y),
\end{equation}
and take any $\bar x\in U$ with $0\in\partial f(\bar x)$. Again, Proposition \ref{Mainprop} guarantees that $\nabla e_{\gamma}f(\bar{x})=0$. Hence, after straightforward manipulations,  we obtain
\begin{eqnarray*}
\|x-\bar x\|^2
&=&\|x-z\|^2+\|z-\bar x\|^2+2\langle x-z,z-\bar x\rangle\\
&=&\|x-z\|^2+\|z-\bar x\|^2+2(\lambda-\gamma)\langle\nabla e_{\gamma}f(z),z-\bar x\rangle\\
&\geq&\|x-z\|^2+\|z-\bar x\|^2,
\end{eqnarray*}
where the last inequality holds owing to the fact that $e_{\gamma}$ is convex on  $B[\bar x, \delta]$. On the other hand
\begin{eqnarray*}
\|y-\bar x\|^2
&=&\|y-z\|^2+\|z-\bar x\|^2+2\langle y-z,z-\bar x\rangle\\
&=&\left(\frac{\gamma}{\lambda-\gamma}\right)^2\|x-z\|^2+\|z-\bar x\|^2+\frac{2\gamma}{(\lambda-\gamma)}\langle z-x,z-\bar x\rangle\\
&\leq&\left(\frac{\gamma}{\lambda-\gamma}\right)^2\|x-z\|^2+\|x-\bar x\|^2-\|x-z\|^2-2{\gamma}\langle\nabla 
e_{\gamma}f(z),z-\bar x\rangle\\
&\leq&\|x-\bar x\|^2-\left(1-\left(\frac{\gamma}{\lambda-\gamma}\right)^2\right)\|x-z\|^2\\
&\leq&\|x-\bar{x}\|^2,
\end{eqnarray*}
where the last inequality holds since  $2\gamma<\lambda$. Hence, it follows easily that $y\in B[\bar x, \delta]$. To finish of the lemma, we just have to conclude that $y=P_{\lambda}f(x)$. From Proposition \ref{Mainprop}, we have that 
$$\nabla e_\gamma f(x)=\lambda^{-1}\left[I-\left[I+\gamma\partial f\right]^{-1}\right](x).$$ Thanks to 
\cite[Lemma 4.5]{Rockafellar1996}, the previous expression can be rewritten as
$$
\nabla e_\gamma f(x)= (\gamma I+[\partial f]^{-1})^{-1}(x).
$$
 In view of \eqref{eqdefinitionz}, we have
$$
(\lambda-\gamma)^{-1}(x-z)=(\gamma I+[\partial f]^{-1})^{-1}(z).
$$
Consequently,
$$
z\in(\gamma I+[\partial f]^{-1})((\lambda-\gamma)^{-1}(x-z)).
$$
Hence, it is easy to obtain that
$$
z-\gamma(\lambda-\gamma)^{-1}(x-z)\in[\partial f]^{-1}(\lambda-\gamma)^{-1}(x-z)).
$$
Since $y= z-\gamma(\lambda-\gamma)^{-1}(x-z)$, we can use \eqref{maineq} to obtain
$$
y\in[\partial f]^{-1}(\lambda-\gamma)^{-1}(x-z).
$$
Thus,
\begin{eqnarray*}
y\in[\partial f]^{-1}((\lambda-\gamma)^{-1}(x-z))&\Longleftrightarrow& (\lambda-\gamma)^{-1}(x-z)\in\partial f(y).\\
&\Longleftrightarrow& \lambda^{-1}(x-y)\in\partial f(y)\\
&\Longleftrightarrow& x\in(I+\lambda \partial f)(y)\\
&\Longleftrightarrow& y\in(I+\lambda \partial f)^{-1}(x).
\end{eqnarray*}
Due to Proposition \ref{Mainprop}, the proximal operator is well-defined, single-valued, and $P_{\lambda}f(x)=(I+\lambda \partial f)^{-1}(x)$ holds for all $x\in B[\bar x,\beta]$. Therefore, $y=P_{\lambda}f(x)$, which completes the proof.
\end{proof}

\section{The algorithm and convergence analysis}\label{section4}

Now, we present a version of the inexact proximal method considered in \cite{IusemPennanen2003} for weakly convex functions. From now on, $f:\mathbb{R}^n\to\overline{\mathbb{R}}$ is a proper $\rho$-weakly convex on a set $U$, bounded below and lower semicontinuous function on $U$ and let $\bar x\in\mbox{dom}\,f\cap U$ be a point such that $0\in\partial f(\bar x)$. Furthermore, the following assumption is considered throughout the entire work:
\begin{description}
\item[\sc{Assumption 1:}] For $\delta>0$, the Moreau envelope of $f$ is convex on $B[\bar x, \delta]$.
\end{description}

Let $\bar{\lambda}$ be a positive parameter. The method, to be called \textbf{Algorithm 1}, requires bounded sequences of positive real numbers $\{\gamma_k\},\{\lambda_k\}$ satisfying, 
\begin{equation*}\label{maincity23}
0<\bar{\lambda}<2\gamma_k<\lambda_k<\dfrac{1}{\rho},\quad\forall k\geq0.
\end{equation*}
In addition, let $\beta>0$ to be taken as in \eqref{beta}. The algorithm is presented in the following way:\\
---------------------------------------------------------------------------------------------------------------------\\
 \begin{algorithm1} \\
---------------------------------------------------------------------------------------------------------------------\\
\begin{description}
\item[\sc{Initialization:}] Choose $x^0\in B[\bar{x},\beta]$ as the inicial point. 
\item[\sc{Iterative step:}] 
\item[(a)] Given $x^k\in B[\bar{x},\beta]$, compute $z^k\in B(\bar{x},\delta)$, satisfying  
\begin{equation}\label{definitionz}
z^k=x^k-(\lambda_k-\gamma_k)\nabla e_{\gamma_k}f(z^k).
\end{equation}
\item[(b)] The iterate $x^{k+1}$ is given by  
\begin{equation}\label{definitiony}
x^{k+1}=z^k-\gamma_{k}(\lambda_k-\gamma_k)^{-1}(x^k-z^k).
\end{equation}
\end{description}
\end{algorithm1}
-----------------------------------------------------------------------------------------------------------------------------------------

Next, we prove that Algorithm 1 is well defined. 

\begin{proposition}\label{welldefinedness} The Algorithm 1 is well defined. Besides, for all $k\geq0$,
$$x^{k+1}=P_{\lambda_k}(x^k)\quad\mbox{and}\quad x^k\in B[\bar{x},\beta].$$
In addition, we have that $\sum_k\|x^k-x^{k+1}\|^2<\infty$. 
\end{proposition}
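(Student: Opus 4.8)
The plan is to prove Proposition \ref{welldefinedness} by induction on $k$, using Lemma \ref{mainlemma} as the engine at each step, and then to extract the summability of $\sum_k \|x^k - x^{k+1}\|^2$ from a monotonicity/telescoping argument built on the inequalities already established inside the proof of the lemma.

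First I would set up the induction. The base case is immediate: the initialization chooses $x^0 \in B[\bar x, \beta]$ by hypothesis. For the inductive step, suppose $x^k \in B[\bar x, \beta]$. The parameters $\gamma_k, \lambda_k$ satisfy $0 < \bar\lambda < 2\gamma_k < \lambda_k < 1/\rho$, so in particular $0 < 2\gamma_k < \lambda_k < 1/\rho$, which is exactly the parameter constraint required by Lemma \ref{mainlemma}. Since Assumption 1 gives convexity of $e_{\gamma_k} f$ on $B[\bar x, \delta]$ (one must note the lemma's convexity hypothesis is on $e_\gamma f$ for the particular $\gamma = \gamma_k$ in use; I would remark that Assumption 1 is to be read as holding for every $\gamma_k$ arising in the algorithm, or invoke it with $\gamma = \gamma_k$), and $0 \in \partial f(\bar x)$ holds by standing assumption, all hypotheses of Lemma \ref{mainlemma} are met. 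Applying part (i) yields the existence of $z^k \in B(\bar x, \delta)$ satisfying \eqref{definitionz}, so step (a) is realizable; applying part (ii) to the $y$ defined by \eqref{definitiony} gives $x^{k+1} = y \in B[\bar x, \beta]$ and $x^{k+1} = P_{\lambda_k} f(x^k)$. This simultaneously establishes well-definedness, the proximal identity, and the invariance $x^k \in B[\bar x, \beta]$ for all $k$.

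For the summability claim I would turn to the Moreau envelope as a Lyapunov function. The natural quantity is $e_{\lambda_k} f(x^k)$ or, more robustly, the value $f(x^{k+1}) + \frac{1}{2\lambda_k}\|x^{k+1}-x^k\|^2$. Since $x^{k+1} = P_{\lambda_k} f(x^k)$ minimizes $y \mapsto f(y) + \frac{1}{2\lambda_k}\|y - x^k\|^2$, comparing the optimal value against the feasible point $y = x^k$ gives
\begin{equation*}
f(x^{k+1}) + \frac{1}{2\lambda_k}\|x^{k+1}-x^k\|^2 \leq f(x^k),
\end{equation*}
hence $\frac{1}{2\lambda_k}\|x^{k+1}-x^k\|^2 \leq f(x^k) - f(x^{k+1})$. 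Summing over $k$ telescopes the right-hand side, and since the $\lambda_k$ are bounded above, say by $\Lambda$, I obtain $\frac{1}{2\Lambda}\sum_k \|x^{k+1}-x^k\|^2 \leq f(x^0) - \inf_k f(x^{k+1})$; boundedness below of $f$ on $U$ (together with $x^k \in B[\bar x, \beta] \subset U$) bounds the partial sums uniformly, giving $\sum_k \|x^k - x^{k+1}\|^2 < \infty$.

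The main obstacle I anticipate is the legitimacy of the descent inequality when $P_{\lambda_k} f$ is only a \emph{local} proximal map. The global minimization comparison $f(x^{k+1}) + \frac{1}{2\lambda_k}\|x^{k+1}-x^k\|^2 \leq f(x^k)$ presupposes that $x^{k+1}$ is a genuine (at least local, and value-comparable) minimizer of the proximal subproblem rather than merely a fixed point of the contraction $S$; since Lemma \ref{mainlemma} identifies $x^{k+1}$ via the resolvent inclusion $x^k \in (I + \lambda_k \partial f)(x^{k+1})$ and Proposition \ref{Mainprop} certifies that this coincides with the bona fide proximal operator on $B[\bar x, \beta]$, I would lean on that identification to justify the minimization inequality. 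The secondary subtlety is that the telescoping requires the running values $f(x^k)$ to stay controlled; here the invariance $x^k \in B[\bar x,\beta]$ proven above, together with the boundedness-below hypothesis, closes the gap. I expect the argument to be short once these two points — the local-to-global proximal identification and the domain invariance — are explicitly invoked.
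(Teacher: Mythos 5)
Your proposal is correct and follows essentially the same route as the paper: induction on $k$ via Lemma \ref{mainlemma} to get well-definedness, the proximal identity, and the ball invariance, followed by the descent inequality $\frac{1}{2\lambda_k}\|x^{k+1}-x^k\|^2 \leq f(x^k)-f(x^{k+1})$ from the minimizing property of $P_{\lambda_k}f$, telescoping, and boundedness below. Your extra care about justifying the descent inequality through the identification $P_{\lambda_k}f(x^k)=(I+\lambda_k\partial f)^{-1}(x^k)$ from Proposition \ref{Mainprop} is exactly the implicit step the paper glosses over with ``by the definition of $P_{\lambda_k}f$.''
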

\begin{proof}
Take $x^0 \in B[\bar x,\beta] $ and assume that $x^k\in B[\bar x,\beta]$ hold for some $k$. We will prove that 
 $x^{k+1}\in B[\bar x,\beta]$. Applying Lemma \ref{mainlemma} for $x=x^k$, $z=z^{k}$ and $y=x^{k+1}$, it yields that there exists $z^k\in B[\bar x, \delta]$  satisfying \eqref{definitionz} and \eqref{definitiony}. This proves the well definedness of the Algorithm 1. In addition, we obtain that, for all $k\geq0$, $x^{k+1}\in B[\bar x, \beta]$ and $x^{k+1} = P_{\lambda_k}f(x^k)$. On the other hand, we can use the definition of $P_{\lambda_k}f$ to obtain that
\begin{equation}\label{ineqf(x)}
\frac{1}{2\lambda_k}\|x^k-x^{k+1}\|^2\leq f(x^k)-f(x^{k+1}).
\end{equation}
Summing the last inequality over $k=1,\dots,n$, for all $k\geq0$, and using that $\lambda_k<1/\rho$,  we have 
$$\sum_{k=1}^{n}\|x^k-x^{k+1}\|^2\leq\frac{2}{\rho}( f(x^1)-f(x^{n+1})).$$ Thanks to \eqref{ineqf(x)}, $\{f(x^k)\}$ is non-increasing. Since $f$ is bounded below we can that $\{f(x^k)\}$ is bounded. Thus, it is easy to prove that $\sum_k\|x^k-x^{k+1}\|^2$ converges.
\end{proof}

Thereafter, we established a very useful lemma. 

\begin{lemma} For all $x\in  B[\bar x,\beta]$,
\begin{equation}\label{eq:mainfejer}
		\left\|x^{k+1}-x\right\|^2\leq\left\|x^{k}-x\right\|^2+2\lambda_k\left(f(x)-f(x^{k+1})\right)
		,\quad k=0,1,\dots.
		\end{equation}
\end{lemma}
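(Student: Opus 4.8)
The plan is to prove \eqref{eq:mainfejer} by working with the Moreau envelope $e_{\lambda_k}f$ rather than with $f$ directly, exploiting its convexity (Assumption 1) to obtain a clean convex-type subgradient inequality. Throughout, by Proposition \ref{welldefinedness} I may assume $x^k\in B[\bar x,\beta]$ and $x^{k+1}=P_{\lambda_k}f(x^k)$, and I fix an arbitrary $x\in B[\bar x,\beta]$.

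First I would record the two elementary facts linking the envelope to the iterates. Since $x^{k+1}$ is the minimizer defining $P_{\lambda_k}f(x^k)$ (unique, because the subproblem is strongly convex as $\lambda_k<1/\rho$), evaluating the defining minimization gives the value identity
$$
e_{\lambda_k}f(x^k)=f(x^{k+1})+\frac{1}{2\lambda_k}\|x^{k+1}-x^k\|^2,
$$
while taking $y=x$ as a feasible point in the minimization defining $e_{\lambda_k}f(x)$ yields the upper bound $e_{\lambda_k}f(x)\le f(x)$.

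Next I would invoke the regularity of the envelope. By Proposition \ref{Mainprop}, $e_{\lambda_k}f$ is $C^1$ near $\bar x$ with $\nabla e_{\lambda_k}f(x^k)=\lambda_k^{-1}(x^k-x^{k+1})$ by \eqref{Mainprop2}; by Assumption 1 it is convex on $B[\bar x,\delta]$, and since $\beta<\delta$ both $x$ and $x^k$ lie in this ball. The convex gradient inequality then gives
$$
e_{\lambda_k}f(x)\ge e_{\lambda_k}f(x^k)+\frac{1}{\lambda_k}\langle x^k-x^{k+1},\,x-x^k\rangle.
$$
Chaining this with $e_{\lambda_k}f(x)\le f(x)$ and the value identity for $e_{\lambda_k}f(x^k)$, then multiplying through by $2\lambda_k$, produces
$$
2\lambda_k\big(f(x)-f(x^{k+1})\big)\ge \|x^{k+1}-x^k\|^2+2\langle x^k-x^{k+1},\,x-x^k\rangle.
$$
To finish I would apply the three-point identity $\|x^{k+1}-x\|^2-\|x^k-x\|^2=\|x^{k+1}-x^k\|^2+2\langle x^k-x^{k+1},\,x-x^k\rangle$, which rewrites the right-hand side above as exactly $\|x^{k+1}-x\|^2-\|x^k-x\|^2$, delivering \eqref{eq:mainfejer}.

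The delicate point — and the reason the argument must be routed through the envelope — is that applying weak convexity of $f$ directly at $x^{k+1}$ with the subgradient $\lambda_k^{-1}(x^k-x^{k+1})\in\partial f(x^{k+1})$ only produces the quadratic lower estimate carrying a $-\tfrac{\rho}{2}\|x-x^{k+1}\|^2$ term, which degrades the conclusion to the strictly weaker bound $(1-\lambda_k\rho)\|x^{k+1}-x\|^2\le\|x^k-x\|^2+2\lambda_k(f(x)-f(x^{k+1}))$. Convexity of the Moreau envelope is precisely what removes this defect, since the envelope obeys a genuine (first-order) convex subgradient inequality with no hypomonotone remainder. The only things to check carefully are that the envelope parameter matches the proximal parameter $\lambda_k$ (so that the value identity and the gradient formula \eqref{Mainprop2} hold simultaneously) and that $x,x^k\in B[\bar x,\delta]$, which is guaranteed by $\beta<\delta$.
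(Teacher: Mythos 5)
Your proposal is correct and follows essentially the same route as the paper: the gradient formula $\nabla e_{\lambda_k}f(x^k)=\lambda_k^{-1}(x^k-x^{k+1})$ from Proposition \ref{Mainprop}, the convex gradient inequality for $e_{\lambda_k}f$ under Assumption 1, the value identity $e_{\lambda_k}f(x^k)=f(x^{k+1})+\frac{1}{2\lambda_k}\|x^{k+1}-x^k\|^2$ together with $e_{\lambda_k}f(x)\le f(x)$, and the three-point identity to conclude. Your closing remark on why weak convexity of $f$ alone would only yield the degraded bound is a useful observation not in the paper, but the argument itself is the same.
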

\begin{proof}
Since $x^{k+1} = P_{\lambda_k}f(x^k)$, we can use \eqref{Mainprop2} to obtain that
$$
\nabla e_{\lambda_k}f(x^k)=\frac{1}{\lambda_k}(x^k-x^{k+1}).
$$
Now, take any $x\in  B[\bar x,\beta]$. Once $e_{\lambda_k} f$ is a convex function on $B[\bar x,\beta]$,
$$
e_{\lambda_k}f(x)\geq e_{\lambda_k}f(x^k)+\left\langle \nabla e_{\lambda_k}f(x^k),x-x^k\right\rangle.
$$
Again, taking into account that $x^{k+1} = P_{\lambda_k}f(x^k)$ holds 
$$
e_{\lambda_k}f(x^k)=f(x^{k+1})+\frac{1}{2\lambda_k}\left\|x^{k+1}-x^k\right\|^2.
$$
Besides, considering that $f(x)\geq e_{\lambda_k}f(x)$, we obtain 
\begin{equation}\label{eqtext}
f(x)-f(x^{k+1})\geq\frac{1}{2\lambda_k}\left\|x^{k+1}-x^k\right\|^2+\frac{1}{\lambda_k}\left\langle x^k-x^{k+1},x-x^k\right\rangle.
\end{equation}
Hence, since
$$
\left\|x^{k+1}-x\right\|^2=\left\|x^{k+1}-x^k\right\|^2+\left\|x^{k}-x\right\|^2+2\left\langle x^k-x^{k+1},x-x^k\right\rangle,
$$
we can combine the previous expression with \eqref{eqtext} to obtain \eqref{eq:mainfejer}.

\end{proof}

Now, we will briefly review two essential lemmas on non-negative sequences. Subsequently, we will introduce our main convergence results, building on this groundwork.

\begin{lemma}[see \cite{Polyak1987}]\label{lemmaone} Let $\{u^k\}$, $\{\alpha_k\}$, and $\{\beta_k\}$ be nonnegative sequences of real numbers satisfying $u^{k+1}\leq(1+\alpha_k)u^k+\beta_k$ such that
	$\sum_k\alpha_k<\infty$ and $\sum_k\beta_k<\infty$. Then, the sequence $\{u^k\}$ converges.
\end{lemma}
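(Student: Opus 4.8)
The plan is to reduce this multiplicative recursion to a purely additive one by normalizing $u^k$ with the infinite product of the factors $1+\alpha_k$, and then to invoke the elementary fact that a non-increasing sequence that is bounded below converges. This is the classical Polyak/Robbins--Siegmund-type argument.

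First I would control the product of the factors. Since $\sum_k\alpha_k<\infty$ and each $\alpha_k\geq 0$, the partial products $P_k:=\prod_{j=0}^{k-1}(1+\alpha_j)$, with $P_0:=1$, form a non-decreasing sequence that is bounded above: the estimate $1+\alpha_j\leq e^{\alpha_j}$ gives
$$
P_k\leq\exp\left(\sum_{j}\alpha_j\right)<\infty.
$$
Hence $\{P_k\}$ converges to a finite limit $P_\infty\geq 1$, and in particular $P_k\geq 1$ for every $k$.

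Next I would divide the hypothesis $u^{k+1}\leq(1+\alpha_k)u^k+\beta_k$ by $P_{k+1}=(1+\alpha_k)P_k$. Writing $w^k:=u^k/P_k$ and $\tilde\beta_k:=\beta_k/P_{k+1}$, this yields the additive inequality
$$
w^{k+1}\leq w^k+\tilde\beta_k.
$$
Because $P_{k+1}\geq 1$, the perturbations stay summable, $\sum_k\tilde\beta_k\leq\sum_k\beta_k<\infty$. I would then introduce the auxiliary sequence $s^k:=w^k-\sum_{j=0}^{k-1}\tilde\beta_j$; the additive inequality shows $s^{k+1}\leq s^k$, so $\{s^k\}$ is non-increasing, and since $w^k\geq 0$ while $\sum_j\tilde\beta_j$ is finite, $\{s^k\}$ is bounded below. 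A non-increasing sequence bounded below converges, so $s^k$ has a finite limit; consequently $w^k=s^k+\sum_{j=0}^{k-1}\tilde\beta_j$ converges to a finite limit $w^\infty$. Finally $u^k=P_k\,w^k\to P_\infty w^\infty$, which proves that $\{u^k\}$ converges.

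The main (and essentially only) subtlety is the passage from the multiplicative perturbation $1+\alpha_k$ to a genuinely additive one, which is exactly what the normalization by $P_k$ accomplishes. This is clean only because $\sum_k\alpha_k<\infty$ confines $P_k$ to the compact interval $[1,P_\infty]$: that bound simultaneously keeps the division harmless (the $\tilde\beta_k$ remain summable) and keeps the final remultiplication by $P_k$ convergent. Everything else is routine bookkeeping with telescoping sums, so I would not expect any real difficulty beyond stating these estimates carefully.
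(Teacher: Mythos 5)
Your argument is correct and complete: the normalization by the bounded partial products $P_k=\prod_{j<k}(1+\alpha_j)$ reduces the recursion to an additive one, and the monotone-plus-summable-perturbation step is handled properly. The paper itself gives no proof of this lemma (it only cites Polyak), and your proof is exactly the standard argument from that reference, so there is nothing to reconcile.
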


\begin{lemma}[see \cite{Polyak1987}]\label{lemmasec}
	Let $\{\lambda_k\}$ be a sequence of positive numbers, $\{a_k\}$ a sequence of real numbers, and $b_n:=\sigma^{-1}_n\sum_{k=1}^n\lambda_ka_k$, 
	where $\sigma_n:=\sum_{k=1}^n\lambda_k$. If $\sigma_n\to\infty$, $\liminf a_n\leq\liminf b_n\leq\limsup b_n\leq\limsup a_n$.
\end{lemma}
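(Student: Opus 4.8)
The plan is to reduce the four-term chain to a single inequality. The middle inequality $\liminf b_n \le \limsup b_n$ holds automatically for every real sequence, so only the two outer inequalities need work. Furthermore, replacing $a_k$ by $-a_k$ turns $b_n$ into $-b_n$ and interchanges $\liminf$ with $-\limsup$, so the lower bound $\liminf a_n \le \liminf b_n$ is equivalent to $\limsup(-b_n) \le \limsup(-a_n)$. Hence it suffices to establish the single inequality
$$
\limsup_n b_n \le \limsup_n a_n,
$$
and then apply it to the sequence $-a_k$ to recover the lower bound for free.

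To prove this inequality, I would set $A := \limsup_n a_n$. If $A = +\infty$ there is nothing to prove, so I assume $A < +\infty$ and fix an arbitrary $\epsilon > 0$. By the definition of $\limsup$, there is an index $N$ with $a_k < A + \epsilon$ for all $k \ge N$. First I would split the defining sum of $b_n$ into a fixed head and a tail,
$$
\sigma_n b_n = \sum_{k=1}^{N-1}\lambda_k a_k + \sum_{k=N}^{n}\lambda_k a_k,
$$
where the head $C := \sum_{k=1}^{N-1}\lambda_k a_k$ does not depend on $n$. Since every $\lambda_k > 0$, the tail obeys $\sum_{k=N}^{n}\lambda_k a_k \le (A+\epsilon)(\sigma_n - \sigma_{N-1})$, and dividing by $\sigma_n$ yields
$$
b_n \le \frac{C}{\sigma_n} + (A+\epsilon)\left(1 - \frac{\sigma_{N-1}}{\sigma_n}\right).
$$

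Next I would let $n \to \infty$, and this is precisely the step where the hypothesis $\sigma_n \to \infty$ enters: it forces $C/\sigma_n \to 0$ and $\sigma_{N-1}/\sigma_n \to 0$, so that $\limsup_n b_n \le A + \epsilon$. Since $\epsilon > 0$ is arbitrary, $\limsup_n b_n \le A$, as required. I expect no genuine obstacle, since this is the classical weighted Ces\`aro (Toeplitz/Stolz) mean estimate; the only points demanding a little care are the degenerate case $A = +\infty$, which is dispatched separately at the outset, and the bookkeeping in the tail bound, where the cleanest route is to retain the factor $(\sigma_n - \sigma_{N-1})$ rather than crudely replacing it by $\sigma_n$, so that the argument needs no sign assumption on $A + \epsilon$.
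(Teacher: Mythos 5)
Your proposal is correct. Note that the paper does not actually prove this lemma --- it is stated with a bare citation to Polyak's \emph{Introduction to Optimization} --- so there is no in-paper argument to compare against; what you have written is the standard, self-contained proof of the weighted Ces\`aro (Toeplitz) mean estimate, and every step checks out: the reduction by symmetry ($a_k\mapsto -a_k$) to the single inequality $\limsup b_n\le\limsup a_n$, the head/tail splitting of $\sigma_n b_n$, the tail bound using $\lambda_k>0$, and the passage to the limit using $\sigma_n\to\infty$. The one point you gloss over is the degenerate case $\limsup a_n=-\infty$: your choice of $N$ via ``$a_k<A+\epsilon$'' is meaningless there, but the same computation with an arbitrary real bound $M$ in place of $A+\epsilon$ gives $\limsup b_n\le M$ for every $M$, so the conclusion survives; you dispatch $A=+\infty$ explicitly and should do the same for $A=-\infty$. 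Your remark about keeping the factor $\sigma_n-\sigma_{N-1}$ rather than bounding it by $\sigma_n$ is exactly the right care to take, since $A+\epsilon$ may be negative.
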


Our main result will now be presented.
\begin{theorem}\label{maintheorem}  
Let $\{x^k\}$ be the sequence generated by Algorithm 1. Then, $\{f(x^k)\}$ converges to $f^*$, where $f^*:=\inf_{x\in B[\bar{x},\beta]} f(x)$. In addition, $\{x^k\}$ converges to $x^*$ with $f^*=f(x^*)$.
\end{theorem}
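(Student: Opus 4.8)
The plan is to combine the descent-type inequality established in Proposition~\ref{welldefinedness} with the Fej\'er-type inequality \eqref{eq:mainfejer} and the two classical lemmas on nonnegative sequences. First I would observe that inequality \eqref{ineqf(x)} already shows that $\{f(x^k)\}$ is nonincreasing, and since $f$ is bounded below on $B[\bar x,\beta]$, the sequence $\{f(x^k)\}$ converges to some limit, call it $\ell$. The substantive task is to identify $\ell$ with $f^*:=\inf_{x\in B[\bar x,\beta]}f(x)$ and to upgrade convergence of the function values into convergence of the iterates themselves.

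To show $\ell = f^*$, I would fix an arbitrary $x\in B[\bar x,\beta]$ and apply \eqref{eq:mainfejer}. Summing over $k$ and telescoping the left-hand side (or, more carefully, treating $\|x^k-x\|^2$ as a quasi-Fej\'er sequence via Lemma~\ref{lemmaone} with $\alpha_k=0$ and $\beta_k=2\lambda_k\max\{f(x)-f(x^{k+1}),0\}$), I would extract summability information. The cleaner route is to rewrite \eqref{eq:mainfejer} as $2\lambda_k\bigl(f(x^{k+1})-f(x)\bigr)\leq \|x^k-x\|^2-\|x^{k+1}-x\|^2$ and sum from $k=0$ to $n-1$, so that $\sum_{k=0}^{n-1}2\lambda_k\bigl(f(x^{k+1})-f(x)\bigr)\leq\|x^0-x\|^2$. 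Since $\lambda_k>\bar\lambda/2>0$ is bounded away from zero and the partial sums $\sigma_n=\sum\lambda_k\to\infty$, Lemma~\ref{lemmasec} applied with $a_{k}=f(x^{k+1})-f(x)$ lets me pass from the weighted averages $b_n$ (which are bounded above because of the telescoping bound) to the limit, yielding $\liminf_k f(x^{k+1})\leq f(x)$; since $\{f(x^k)\}$ already converges to $\ell$, this gives $\ell\leq f(x)$ for every $x\in B[\bar x,\beta]$, hence $\ell\leq f^*$. The reverse inequality $\ell\geq f^*$ is immediate since each $x^k\in B[\bar x,\beta]$ forces $f(x^k)\geq f^*$. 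Therefore $\ell=f^*$.

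For the convergence of $\{x^k\}$ itself, I would return to \eqref{eq:mainfejer} now armed with $f^*=\lim f(x^k)$. Choosing $x$ to be a suitable accumulation point of $\{x^k\}$, I would exploit boundedness: the iterates live in the compact set $B[\bar x,\beta]$, so there is a convergent subsequence $x^{k_j}\to x^*$. By lower semicontinuity of $f$ together with $f(x^{k_j})\to f^*$, one gets $f(x^*)\leq f^*$, and since $x^*\in B[\bar x,\beta]$ also $f(x^*)\geq f^*$, so $f(x^*)=f^*$. To promote subsequential convergence to full convergence, I would invoke the quasi-Fej\'er structure: applying \eqref{eq:mainfejer} with $x=x^*$ gives $\|x^{k+1}-x^*\|^2\leq\|x^k-x^*\|^2+2\lambda_k\bigl(f(x^*)-f(x^{k+1})\bigr)$, and since $f(x^*)=f^*\leq f(x^{k+1})$ the second term is nonpositive, so $\{\|x^k-x^*\|^2\}$ is nonincreasing and hence convergent. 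As the subsequence $\|x^{k_j}-x^*\|\to 0$, the whole sequence $\|x^k-x^*\|\to 0$, establishing $x^k\to x^*$.

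I expect the main obstacle to be the first half, namely the rigorous identification $\ell=f^*$, because one must handle the averaging carefully: the bound $f(x)\geq e_{\lambda_k}f(x)$ and the convexity of the Moreau envelope on $B[\bar x,\beta]$ (Assumption~1) are what make \eqref{eq:mainfejer} valid for the comparison point $x$, and I must ensure the telescoping sum genuinely forces the function values down to the infimum rather than merely to some larger stationary value. The role of $\lambda_k$ being bounded away from zero (via $\bar\lambda<2\gamma_k<\lambda_k$) is essential here, as it guarantees $\sigma_n\to\infty$ so that Lemma~\ref{lemmasec} applies; without it the averaging argument could stall. The second half, converting subsequential to full convergence, is routine once the quasi-Fej\'er monotonicity of $\{\|x^k-x^*\|\}$ is in hand.
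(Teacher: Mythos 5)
Your proposal is correct and follows essentially the same route as the paper's proof: the Fej\'er-type inequality \eqref{eq:mainfejer} is telescoped and combined with Lemma \ref{lemmasec} (using that $\lambda_k$ is bounded away from zero so $\sigma_n\to\infty$) to force $\lim f(x^k)=f^*$, lower semicontinuity on the compact ball identifies $f(x^*)=f^*$ at an accumulation point, and setting $x=x^*$ in \eqref{eq:mainfejer} yields the Fej\'er monotonicity of $\{\|x^k-x^*\|\}$ that upgrades subsequential to full convergence. The only differences are cosmetic orderings of the steps.
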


\begin{proof}
From Proposition \ref{welldefinedness}, for all $k\geq0$, the sequence $\{x^k\}$ lies on the closed ball $B[\bar{x},\beta]$. Hence, the sequence is bounded. Thus, let $x^*$ be an accumulation point of $\{x^k\}$ and $\{x^{k_j}\}$ a subsequence of $\{x^k\}$ such that $\lim_{j\to+\infty}x^{k_j}=x^*$. As $f$ is lower semicontinuous, 
\begin{equation}\label{France9}
f^*\leq f(x^*)\leq\liminf_{j\to+\infty} f(x^{k_j}).
\end{equation}
Due to Proposition \ref{welldefinedness}, the sequence $\{x^k-x^{k+1}\}$ converges to zero, since $\sum_k\|x^k-x^{k+1}\|^2<\infty$. Consequently, $$\liminf_{j\to+\infty} f(x^{k_j})=\liminf_{j\to+\infty} f(x^{k_j+1}).$$ 
Setting $x=x^{k}$ in \eqref{eq:mainfejer}, we obtain
$$
f(x^{k+1})\leq f(x^k).
$$
Since $x^k\in B[\bar{x},\beta]$,  $f^*\leq f(x^k)$ for all $k\geq0$. The, we have
$$
0\leq f(x^{k+1})-f^*\leq f(x^k)-f^*.
$$
Then using Lemma \ref{lemmaone} it follows that the sequence $\{f(x^k)-f^*\}$ converges.
On the other hand, for all $x\in B[\bar{x},\beta]$, we can deduce from \eqref{eq:mainfejer} that
$$
2\lambda_k\left(f(x^{k+1})-f(x)\right)\leq\left\|x^{k}-x\right\|^2-\left\|x^{k+1}-x\right\|^2.
$$
Summing the last inequality over $k=1,\dots,n$ we obtain 
$$
\sum_{k=1}^{n}2\lambda_k\left(f(x^{k+1})-f(x)\right)\leq\left\|x^{1}-x\right\|^2-\left\|x^{n+1}-x\right\|^2.
$$
Hence,
$$
\sigma_n^{-1}\sum_{k=1}^{n}2\lambda_k f(x^{k+1})\leq f(x)+\sigma_n^{-1}\left\|x^{1}-x\right\|^2,
$$
where $\sigma_n:=\sum_{k=1}^{n}2\lambda_k$. As $\lambda_k>\bar{\lambda}$, then $\sigma_n\to\infty$. Thus, we can use 
Lemma \ref{lemmasec} to conclude that,
$$
\liminf_{n\to\infty}f(x^{k+1})\leq\limsup_{n\to\infty}\sigma_n^{-1}\sum_{k=1}^{n}2\lambda_k f(x^{k+1})\leq f(x),
$$
for all $x\in B[\bar{x},\beta]$. Then, since $f$ is lower semicontinuous, last inequality implies that
\begin{equation}\label{Italy9}
f(x^*)\leq\liminf_{n\to\infty}f(x^{k_j+1})\leq \inf_{x\in B[\bar{x},\beta]} f(x)=f^*.
\end{equation}
Combining \eqref{France9} with \eqref{Italy9}, we have that $\liminf_{j\to+\infty} f(x^{k_j})=f^*=f(x^*)$.  Considering that  $\{f(x^k)-f^*\}$ is convergent, it must converge to zero. As, $f^*=f(x^*)$, we have that $f(x^*)<f(x^k)$ for all $k$. Setting $x=x^*$ in \eqref{eq:mainfejer}, we obtain $$\|x^{k+1}-x^*\|\leq\|x^k-x^*\|.$$ From Lemma \ref{lemmaone} we have that the sequence $\{\|x^k-x^*\|\}$ is convergent. Since $\lim_{j\to\infty}x^{k_j}=x^*$, we can conclude that $\{x^k\}$ converge to $x^*$.
	\end{proof}

	Assumption 1 plays a crucial role in our convergence analysis, making it the main idea explored in this paper. Given its significance, we will make a fell comments about the scenarios in which this hypothesis holds true. 

\begin{remark}
It is well known that whenever $f$ is proper, lower semicontinuous, and convex on an open $U\subset\mathbb{R}^n$, the proximal mappings $P_{\lambda}f$ are $1$-Lipschitz continuous on $U$. Besides,
$\nabla e_\lambda f(x)=\lambda^{-1}(x-P_\lambda f(x))$ for all $x\in U$. Hence, for any $x,y\in U$,
\begin{eqnarray*}
\langle\nabla e_{\lambda}f(z)-\nabla e_{\lambda}f(w),z-w\rangle
&=&\lambda^{-1}\langle (z-P_\lambda f(z))-(w-P_\lambda f(w)),z-w\rangle\\
&=&\lambda^{-1}\left(\|z-w\|^2-\langle P_\lambda f(z)-P_\lambda f(w),z-w\rangle\right)\\
&\geq &\lambda^{-1}\left(\|z-w\|^2-\|P_\lambda f(z)-P_\lambda f(w)\|\|z-w\|\right)\\
&\geq&0,
\end{eqnarray*}
where the last inequality is due to the $1$-Lipschitz continuity of $P_{\lambda}f$ on $U$. This shows that 
$e_{\lambda}f$ is also convex on $U$. This is the most obvious situation when the local convexity of the Moreau envelope holds true, see for instance \cite[Proposition 12.30]{Combettes}.
\end{remark}

Unlike the previous remark, the following example shows that the Moreau envelope of a weakly convex function could be locally convex even when its associated function is not.
\begin{example}\label{example1}
Consider $f:\mathbb{R}\to\mathbb{R}$ defined by
$$
f(x):=\max\{2-x^2,x^2\}.
$$
It is straightforward to check that $f$ is $\rho$-weakly convex for all $\rho\geq2$. On the other hand,
$0\in\partial f(1)=[-2,2]$. Hence, for all $x\in[1-2\lambda,1+2\lambda]$, we have $\lambda^{-1}(x-1)\in\partial f(1)$. Consequently, $1\in(I+\lambda \partial f)^{-1}(x)$. From Proposition \ref{Mainprop}, we obtain that, for all $x\in B[1,\alpha]$,  $P_{\lambda}f(x)=(I+\lambda \partial f)^{-1}(x)$. Taking $\lambda\leq\alpha/2$, we conclude that, for all $x\in B[1,2\lambda]$, $$P_\lambda(x)=1\quad\mbox{and}\quad e_\lambda f(x)=1+(1/2\lambda)(x-1)^2.$$ 
\end{example}

\begin{remark}
The local convexity of the Moreau envelope also occurs when $[\partial f]^{-1}$ has a Lipschitz localization.
The mapping $[\partial f]^{-1}:\mathbb{R}^n\rightrightarrows \mathbb{R}^n$ is said to have a \textit{Lipschitz localization} at a point $(x,y)\in \mbox{gph} [\partial f]^{-1}:=\{(x,y):v\in[\partial f]^{-1}(x)\}$ if there are neighborhoods $X\ni x$ and $Y\ni v$ such that the mapping 
$x\mapsto [\partial f]^{-1}(x)\cap Y$ is singlevalued and Lipschitz continuous on $X$, see  \cite{Pennanen2002}.   Extensive research in the literature has focused on investigating the existence of such localizations, and specific conditions have been identified to ensure their presence across various problem classes. For detailed insights, see notable works such as \cite{DontchevRockafellar1996,DontchevRockafellar1998,DontchevRockafellar2001,Klatte1999,Klatte1990,Levy2000,Robinson1980}, and the references therein. and other relevant references provide comprehensive information. Particularly, in 
\cite{Robinson1980} was given a sufficient condition known as strong regularity, which guarantees the existence of a Lipschitz localization for the solution mapping of a variational inequality. In addition, as a consequence of \cite[Proposition 7]{Pennanen2002}, if $[\partial f]^{-1}$ has a Lipschitz localization at $(0,\bar{x})$ with Lipschitz constant $\kappa$, then for $\lambda\geq\kappa$, $(\lambda I+[\partial f]^{-1})^{-1}$ is maximal monotone in a neighbourhood of $(0,\bar{x})$. This fact along with Proposition \ref{Mainprop}, shows that $e_{\lambda}f$ is locally convex around a critical point $\bar{x}$ if $[\partial f]^{-1}$ has a Lipschitz localization at $(0,\bar{x})$. Local convexity of $e_{\lambda}f$ was also investigated in \cite{Rockafellar1996}.
\end{remark}
	
We conclude this section by presenting a complexity result for the proposed algorithm. Before that, we will establish an auxiliary result.
\begin{lemma}\label{aux.lemma}
Given $\tau,$ $\lambda>0$ and a set $\left\{z^{j}\right\}_{j=1}^{k}$ of nonnegative real numbers, with $k\geq 2$, let
\begin{equation*}
	m(k):=\arg\min_{j\in\left\{1,\ldots,k-1\right\}}\left((z^{j})^{\tau}+(z^{j+1})^{\tau}\right).
	\label{eq:2.21}
\end{equation*}
If 
\begin{equation*}
	\sum_{j=1}^{k}(z^{j})^{\tau}\leq\lambda,
	\label{eq:2.22}
\end{equation*}
then
\begin{equation*}
	\max\left\{z^{m(k)},z^{m(k)+1}\right\}\leq\left(\dfrac{2\lambda}{k-1}\right)^{\frac{1}{\tau}}.
	\label{eq:2.23}
\end{equation*}
\end{lemma}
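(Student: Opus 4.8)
The plan is to exploit the minimality built into the definition of $m(k)$ by a standard averaging (pigeonhole) argument. The index $m(k)$ is chosen so that the pair-sum $(z^{m(k)})^\tau + (z^{m(k)+1})^\tau$ is the smallest among the $k-1$ consecutive pairs, and any minimum is bounded above by the average of the quantities over which it is taken. First I would form the total $\sum_{j=1}^{k-1}\bigl((z^j)^\tau + (z^{j+1})^\tau\bigr)$ and reindex the shifted sum via $i=j+1$, so that $\sum_{j=1}^{k-1}(z^{j+1})^\tau=\sum_{i=2}^{k}(z^i)^\tau$. Adding this to $\sum_{j=1}^{k-1}(z^j)^\tau$ shows that each interior term $(z^j)^\tau$ with $2\le j\le k-1$ is counted twice while the two endpoints are counted once, giving
$$
\sum_{j=1}^{k-1}\bigl((z^j)^\tau+(z^{j+1})^\tau\bigr)=2\sum_{j=1}^{k}(z^j)^\tau-(z^1)^\tau-(z^k)^\tau\leq 2\sum_{j=1}^{k}(z^j)^\tau\leq 2\lambda,
$$
where the penultimate step drops the nonnegative endpoint terms and the final step invokes the hypothesis.

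Next, since $m(k)$ realizes the minimum pair-sum over the $k-1$ admissible indices, it is bounded by the average of those pair-sums, so
$$
(z^{m(k)})^\tau+(z^{m(k)+1})^\tau\leq\frac{1}{k-1}\sum_{j=1}^{k-1}\bigl((z^j)^\tau+(z^{j+1})^\tau\bigr)\leq\frac{2\lambda}{k-1}.
$$
Because $z^{m(k)}$ and $z^{m(k)+1}$ are nonnegative, each of the individual summands $(z^{m(k)})^\tau$ and $(z^{m(k)+1})^\tau$ is dominated by the full pair-sum, hence by $2\lambda/(k-1)$.

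Finally I would apply the map $t\mapsto t^{1/\tau}$, which is monotone increasing on the nonnegative reals because $\tau>0$, to both bounds, yielding $z^{m(k)}\leq\bigl(2\lambda/(k-1)\bigr)^{1/\tau}$ and $z^{m(k)+1}\leq\bigl(2\lambda/(k-1)\bigr)^{1/\tau}$; taking the larger of the two gives the asserted bound on the maximum. I do not expect any genuine obstacle here: the argument is elementary. The only points deserving mild care are the bookkeeping in the reindexing step, where one must correctly drop the endpoint terms rather than double-count them, and the explicit use of $\tau>0$ to ensure that extracting the $\tau$-th root preserves the inequalities.
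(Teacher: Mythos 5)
Your argument is correct and complete. Note that the paper itself does not prove this lemma at all --- it simply cites Lemma~4 of Grapiglia, Gon\c{c}alves and Silva --- so your self-contained averaging/pigeonhole proof (reindexing the shifted sum, bounding the minimal pair-sum by the average, and then taking the $\tau$-th root, which is monotone since $\tau>0$) supplies exactly the standard elementary argument behind that reference. Every step checks out, including the bookkeeping identity $\sum_{j=1}^{k-1}\bigl((z^j)^\tau+(z^{j+1})^\tau\bigr)=2\sum_{j=1}^{k}(z^j)^\tau-(z^1)^\tau-(z^k)^\tau$ and the subsequent dropping of the nonnegative endpoint terms.
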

\begin{proof}
See \cite[Lemma~4]{GrapiglaSilva}.
\end{proof}
The following theorem establishes an iteration-complexity bound of $\mathcal{O}\left(\epsilon^{-2}\right)$ for the proposed method.
\begin{theorem}
	\label{thm:3.1}
	Given $\epsilon>0$, let $\left\{x^{k}\right\}_{k=1}^{T}$ be generated by the Algorithm 1, such that
	\begin{equation}
		\|x^{k+1}-x^k\|>\epsilon,\quad k=0,\ldots,T.
		\label{eq:3.10}
	\end{equation}
	Then,
	\begin{equation}
		T< 2+\frac{2}{\rho}(f(x^0)-f(x^*))\epsilon^{-2}.
		\label{eq:3.11}
	\end{equation}
\end{theorem}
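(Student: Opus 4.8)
The target is the iteration-complexity bound
\begin{equation*}
T< 2+\frac{2}{\rho}(f(x^0)-f(x^*))\epsilon^{-2}.
\end{equation*}
The plan is to feed the descent-type estimate already established in Proposition~\ref{welldefinedness} into the auxiliary Lemma~\ref{aux.lemma} with the natural choice $z^j=\|x^j-x^{j-1}\|$ and $\tau=2$. First I would invoke inequality \eqref{ineqf(x)}, namely
$$
\frac{1}{2\lambda_k}\|x^k-x^{k+1}\|^2\leq f(x^k)-f(x^{k+1}),
$$
and combine it with the bound $\lambda_k<1/\rho$ to get the per-step inequality $\frac{\rho}{2}\|x^k-x^{k+1}\|^2\leq f(x^k)-f(x^{k+1})$. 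Summing this telescoping inequality over the finitely many indices $k=0,\dots,T$ collapses the right-hand side to $f(x^0)-f(x^{T+1})$, and since $f(x^{T+1})\geq f^*=f(x^*)$ by Theorem~\ref{maintheorem}, I obtain
$$
\sum_{k=0}^{T}\|x^k-x^{k+1}\|^2\leq \frac{2}{\rho}\bigl(f(x^0)-f(x^*)\bigr).
$$

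The second step is to apply Lemma~\ref{aux.lemma}. Setting $\tau=2$ and $\lambda:=\frac{2}{\rho}(f(x^0)-f(x^*))$, the summability bound above is exactly the hypothesis of that lemma (with the index set relabeled so that the $T+1$ squared step-lengths play the role of the $z^j$). The lemma then guarantees the existence of an index at which $\max\{z^{m},z^{m+1}\}\leq(2\lambda/(k-1))^{1/2}$. The key point is that assumption \eqref{eq:3.10} forces \emph{every} step to satisfy $\|x^{k+1}-x^k\|>\epsilon$, so in particular the minimizing pair produced by the lemma is bounded below by $\epsilon$. Comparing the lower bound $\epsilon$ with the upper bound from the lemma yields $\epsilon^2\leq 2\lambda/(\text{count}-1)$, which upon solving for the number of iterates gives the stated inequality after substituting the value of $\lambda$.

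The main obstacle — really a bookkeeping subtlety rather than a deep difficulty — lies in reconciling the index conventions. Lemma~\ref{aux.lemma} is stated for a set $\{z^j\}_{j=1}^{k}$ with $k\geq2$ and delivers the bound $(2\lambda/(k-1))^{1/2}$, whereas Algorithm~1 is indexed from $k=0$ and condition \eqref{eq:3.10} runs over $k=0,\dots,T$, producing $T+1$ consecutive steps. I would carefully set $k=T+1$ (the number of available step-lengths) in the lemma, so that the denominator becomes $(T+1)-1=T$, and then the chain $\epsilon^2<\max\{z^{m},z^{m+1}\}^2\leq 2\lambda/T$ rearranges to $T<2\lambda/\epsilon^2=\frac{2}{\rho}(f(x^0)-f(x^*))\epsilon^{-2}$. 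The extra additive constant $2$ in \eqref{eq:3.11} absorbs the off-by-one shifts in passing between the number of iterates $T$, the number of step-lengths, and the lemma's index $k-1$; I would track this shift explicitly to confirm the strict inequality and the precise constant, which is the one place where sign and boundary conventions must be handled with care.
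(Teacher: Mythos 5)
Your proposal is correct and follows essentially the same route as the paper: telescoping the descent inequality $\frac{\rho}{2}\|x^{k+1}-x^k\|^2\leq f(x^k)-f(x^{k+1})$ to get $\sum\|x^{k+1}-x^k\|^2\leq\frac{2}{\rho}(f(x^0)-f(x^*))$, then feeding this into Lemma~\ref{aux.lemma} with $\tau=2$ and comparing against the lower bound $\epsilon$ from \eqref{eq:3.10}. The only difference is bookkeeping: the paper applies the lemma to the first $T-1$ step-lengths (assuming $T\geq 3$), which produces the denominator $T-2$ and hence exactly the additive constant $2$ in \eqref{eq:3.11}, whereas your choice of using all $T+1$ steps gives the marginally sharper $T<\frac{2}{\rho}(f(x^0)-f(x^*))\epsilon^{-2}$, which of course implies the stated bound.
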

\begin{proof}
It arises immediately from \eqref{eq:mainfejer} that
\begin{eqnarray}\label{eq:comple}
\sum_{j=0}^{k}\|x^{j+1}-x^j\|^2&\leq& \frac{2}{\rho}\sum_{j=0}^{k}(f(x^j)-f(x^{j+1}))=\frac{2}{\rho}(f(x^0)-f(x^{j+1}))\nonumber\\
&\leq& \frac{2}{\rho}(f(x^0)-f(x^*)),
\end{eqnarray}
where the last inequality is due $x^*$ be a local minimal of $f.$ Let us denote $s^k:=\|x^{k+1}-x^k\|.$ Let
\begin{equation*}
	k_{*}=\arg\min_{j\in\left\{1,\ldots,T-2\right\}}\left(\|s^{j}\|^{2}+\|s^{j+1}\|^{2}\right)
\end{equation*}
and assume that $T\geq 3$. Then, by Lemma \ref{aux.lemma} with $z^{j}=\|s^{j}\|$, $k=T-1$ and $\tau=2$, it follows from \eqref{eq:comple} that
\begin{equation}
	\max\left\{\|s^{k_{*}}\|,\|s^{k_{*}+1}\|\right\}\leq\left[\dfrac{2(f(x^{0})-f(x^*))}{\rho}\right]^{\frac{1}{2}}\dfrac{1}{(T-2)^{\frac{1}{2}}}.
	\label{eq:3.14}
\end{equation}
By combining \eqref{eq:3.10} with the previous inequality we can conclude that
$$
\epsilon<\max\left\{\|s^{k_{*}}\|,\|s^{k_{*}+1}\|\right\}\leq\left[\dfrac{2(f(x^{0})-f(x^*))}{\rho}\right]^{\frac{1}{2}}\dfrac{1}{(T-2)^{\frac{1}{2}}},
$$ 
which, after simples manipulations, implies the desired inequality \eqref{eq:3.11}.
\end{proof}

To summarize, the aforementioned theorem establishes that the algorithm analyzed in this paper has a worst-case evaluation complexity of no more than $\mathcal{O}\left(\epsilon^{-2}\right)$ for generating an iterate $x^k$ such that $\|x^{k+1}-x^k\|\leq\epsilon$, where $\epsilon>0$.

\section{Conclusions}\label{section5}

A proximal point algorithm to be applied to weakly convex functions was introduced. In contrast to previous work, we use the Moreau envelope as a replacement for the Yosida regularization. While our algorithm is similar to the previous versions, it offers more versatility for future generalizations, including those in scalar optimization or even in a multi-objective setting. By presenting this version of the algorithm, we hope to provide a more adaptable tool for addressing a wider range of optimization problems.

One of the primary challenges we face in our analysis is demonstrating the existence of the iterates of the method. To address this issue, our approach relies on the well-known Banach fixed-point theorem. This not only provides a more direct proof of the result, but it also sheds new light on the theoretical underpinnings of the algorithm. Additionally, this lemma will be better suited for future generalizations that do not involve point-set operators. 

Our analysis also includes a study of the complexity bounds of the algorithm. We derive a result that demonstrate the effectiveness of the method in terms of convergence rate and computational efficiency. 

Moreover, our work offers significant potential for future research in the field of optimization. By introducing a more versatile version of the proximal point algorithm, we open up new avenues for exploring a wider range of optimization problems. We believe that our approach can serve as a valuable starting point for addressing a variety of open questions in this area.

\end{document}